\providecommand{\EE}{\mathbb{E}}
\DeclareMathOperator{\Inf}{Inf}
\newtheorem{theorem}{Theorem}[section]
\newtheorem{lemma}[theorem]{Lemma}
\newtheorem{corollary}[theorem]{Corollary}
\theoremstyle{definition}
\title{Boolean constant degree functions on the slice \\ are juntas}
\author{Yuval Filmus\footnote{\hbadness=1500 Technion --- Israel Institute of Technology, Haifa, Israel.
This research was funded by ISF grant 1337/16.
},
Ferdinand Ihringer\footnote{\hbadness=1500 Einstein Institute of Mathematics, Hebrew University of Jerusalem, Israel.
Department of Pure Mathematics and Computer Algebra, Ghent University, Belgium. Supported by ERC advanced grant 320924.
The author is supported by a postdoctoral fellowship of the Research Foundation --- Flanders (FWO).}
}
\begin{document}

\maketitle

\begin{abstract}
 We show that a Boolean degree~$d$ function on the slice $\binom{[n]}{k} = \{ (x_1,\ldots,x_n) \in \{0,1\} : \sum_{i=1}^n x_i = k \}$ is a junta, assuming that $k,n-k$ are large enough. This generalizes a classical result of Nisan and Szegedy on the hypercube.
 Moreover, we show that the maximum number of coordinates that a Boolean degree~$d$ function can depend on is the same on the slice and the hypercube.
\end{abstract}

\section{Introduction} \label{sec:intro}

Nisan and Szegedy~\cite{NisanSzegedy} showed that a Boolean degree~$d$ function on the hypercube $\{0,1\}^n$ depends on at most $d2^{d-1}$ coordinates, and described a Boolean degree~$d$ function which depends on $\Omega(2^d)$ coordinates. Let us denote the optimal bound by $\gamma(d)$. The goal of this paper is to generalize this result to the \emph{slice} $\binom{[n]}{k}$ or \emph{Johnson scheme} $J(n,k)$, which consists of all points in the hypercube having Hamming weight~$k$:

\begin{theorem} \label{thm:main}
 There exists a constant $C$ such that the following holds. If $C^d \leq k \leq n-C^d$ and $f\colon \binom{[n]}{k} \to \{0,1\}$ has degree~$d$, then $f$ depends on at most $\gamma(d)$ coordinates.
\end{theorem}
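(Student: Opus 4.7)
The plan is to reduce to the Nisan--Szegedy bound on the hypercube via a restriction-and-embedding argument. Let $J \subseteq [n]$ be the set of coordinates on which $f$ genuinely depends; the goal is to show $|J| \le \gamma(d)$. As a first step, I would establish a preliminary junta-type bound $|J| \le N(d)$ for some function $N$ depending only on $d$. A natural route is to define a notion of ``swap-influence'' on the slice, $\Inf_j[f]$ (the probability that $f$ changes value when coordinate $j$ is swapped with a uniformly random coordinate of the opposite value), show that each relevant coordinate has $\Inf_j[f]$ bounded below by a positive function of $d$, and that $\sum_j \Inf_j[f]$ for a Boolean degree-$d$ function is bounded above by another function of $d$. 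Choosing the constant $C$ large enough, the hypothesis $C^d \le k \le n - C^d$ then yields $k, n-k \ge 2N(d)$, so there is ample room in $[n] \setminus J$ to play with.

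Next, I construct a hypercube-to-slice embedding $\phi : \{0,1\}^J \hookrightarrow \binom{[n]}{k}$: pick an auxiliary set $A \subseteq [n] \setminus J$ of size $|J|$, a bijection $j \leftrightarrow a_j$ between $J$ and $A$, and fix the remaining $n - 2|J|$ coordinates to a pattern consisting of $k - |J|$ ones and $n - k - |J|$ zeros (both nonnegative by the previous step). For $y \in \{0,1\}^J$, set $\phi(y)_j = y_j$, $\phi(y)_{a_j} = 1 - y_j$, and the prescribed fixed values elsewhere; then $\phi(y)$ has Hamming weight exactly $k$. The pullback $g = f \circ \phi : \{0,1\}^J \to \{0,1\}$ is Boolean of degree at most $d$, since substituting $x_j = y_j$ and $x_{a_j} = 1 - y_j$ into the multilinear representation of $f$ preserves the degree bound. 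If $g$ depends on every coordinate of $J$, then applying Nisan--Szegedy to $g$ yields $|J| \le \gamma(d)$, as desired.

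The main obstacle is precisely the dependence-preservation step: choosing the pairing $j \leftrightarrow a_j$ and the fixed pattern so that $g$ really depends on every $j \in J$. I would argue probabilistically: for a uniformly random admissible $\phi$, bound the probability that $g$ becomes independent of a given $j$ by using the degree-$d$ structure of $f$ to control how restrictions can cancel witnesses of dependence (a single swap that witnesses dependence on $j$ in the slice should survive, on average, because the polynomial constraints forcing its destruction involve only few coordinates). A union bound over $j \in J$ then produces a good embedding. The slack $k, n-k \ge C^d$ provides enough randomness for this estimate to succeed, and is the only place where the constant $C$ really enters.
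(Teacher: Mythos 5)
Your overall architecture --- first a crude junta bound $N(d)$ depending only on $d$, then an embedding of $\{0,1\}^J$ into the slice so as to transport $f$ to the hypercube and invoke Nisan--Szegedy --- matches the paper's. Your embedding (pairing each $j\in J$ with an auxiliary $a_j\in[n]\setminus J$ and setting $x_{a_j}=1-y_j$) is a valid, arguably simpler alternative to the paper's degree-preservation argument, which instead symmetrizes over the non-junta coordinates and applies Minsky--Papert. Moreover, once $J$ is taken to be a \emph{minimal} junta set, the pullback $g=f\circ\phi$ is literally the junta function $h$ with $f(x)=h(x|_J)$, so it depends on all of $J$ by minimality; the ``main obstacle'' you identify, and the probabilistic argument you sketch for it, are unnecessary. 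Do note that ``the set of coordinates on which $f$ genuinely depends'' is not well defined on the slice (the paper points out that $\sum_{i\in I}x_i$ is both an $I$-junta and an $[n]\setminus I$-junta), so this step must be phrased via a minimal junta set rather than via per-coordinate influences.

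The genuine gap is in your first step. The only available route to ``every nonzero influence of a Boolean degree-$d$ function on the slice is bounded below'' is hypercontractivity, and on $\binom{[n]}{k}$ the hypercontractive parameter is $\rho=\left(\frac{2k(n-k)}{n(n-1)}\right)^{C_H}$, which tends to $0$ as $k/n\to 0$. The resulting lower bound on nonzero influences is therefore of the form $(k/n)^{\Theta(d)}$, not a function of $d$ alone, and the junta bound it yields blows up exactly in the regime the theorem is about: $k$ of order $C^d$ with $n$ arbitrarily large. Your hypothesis $C^d\le k\le n-C^d$ guarantees room for the embedding but does nothing to keep $k/n$ bounded away from $0$. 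This is precisely the difficulty that the paper's induction step is designed to overcome: the crude bound is proved only on the balanced slice $\binom{[2k]}{k}$, where $\rho$ is a universal constant (Lemma~\ref{lem:bootstrapping}), and is then propagated to $\binom{[n]}{k}$ for all larger $n$ by a restriction argument (Lemmas~\ref{lem:restrictions}--\ref{lem:induction}) that never revisits hypercontractivity. Without some substitute for that induction, your preliminary bound $|J|\le N(d)$ does not go through, and the rest of the argument has nothing to stand on.
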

(We explain in Section~\ref{sec:prel} what degree~$d$ means for functions on the hypercube and on the slice.)

Filmus et al.~\cite{fkmw} proved a version of Theorem~\ref{thm:main} (with a non-optimal bound on the number of points) when $k/n$ is bounded away from $0,1$, but their bound deteriorates as $k/n$ gets closer to~$0,1$. We use their result (which we reproduce here, to keep the proof self-contained) to bootstrap our own inductive argument.

The case $d = 1$ is much easier. The following folklore result is proved formally in~\cite{FilmusIhringer1}:

\begin{theorem} \label{thm:degree1}
 If $2 \leq k \leq n-2$ and $f\colon \binom{[n]}{k} \to \{0,1\}$ has degree~$1$, then $f$ depends on at most one coordinate.
\end{theorem}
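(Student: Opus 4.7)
The plan is to work directly with the unique linear representation
\[
f(x) = c + \sum_{i=1}^n a_i x_i,
\]
where we fix the representation by requiring, say, that a chosen coefficient vanishes (modulo the redundancy $\sum_i x_i = k$). The main tool is the swap invariance of the slice: if $x \in \binom{[n]}{k}$ has $x_i = 1$ and $x_j = 0$, let $x^{(i,j)}$ denote the point obtained by exchanging coordinates $i$ and $j$; then $x^{(i,j)} \in \binom{[n]}{k}$ and
\[
f(x^{(i,j)}) - f(x) = a_j - a_i.
\]
Since $f$ is $\{0,1\}$-valued and such an $x$ exists for any ordered pair $i \neq j$ (using $1 \leq k \leq n-1$), we obtain $a_j - a_i \in \{-1, 0, 1\}$ for every pair $i \neq j$.

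From this I would conclude that the multiset $\{a_i\}$ takes at most two consecutive values. It already lies in some arithmetic progression $\{\alpha, \alpha+1, \alpha+2\}$; but if both extremes occurred, the same swap identity would produce a jump of $\pm 2$ in $f$, contradicting Booleanness. Writing $A = \{i : a_i = \alpha+1\}$ and $B = [n] \setminus A$, this gives
\[
f(x) = (c + \alpha k) + |A \cap \operatorname{supp}(x)|.
\]

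In the final step, the integer $m(x) = |A \cap \operatorname{supp}(x)|$ takes every value in the interval $[\max(0, k - |B|),\, \min(k, |A|)]$ as $x$ varies in the slice, and Booleanness forces this interval to contain at most two values. A short case analysis on whether $|A|$ and $|B|$ exceed $k$, using $|A| + |B| = n$ together with the hypothesis $2 \leq k \leq n-2$ (so $k \geq 2$ and $n - k \geq 2$), rules out the cases $|A|, |B| \leq k$ and $|A|, |B| > k$, and collapses the remaining two cases to $\min(|A|, |B|) \leq 1$. If $|A| \leq 1$, the displayed formula shows that $f$ is either constant or depends only on the unique element of $A$; if $|B| \leq 1$, we use $\sum_{i \in A} x_i = k - \sum_{i \in B} x_i$ to rewrite $f$ in terms of $B$ and conclude symmetrically. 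The only minor obstacle is organizing this last case analysis cleanly; conceptually the whole argument is driven by two applications of the one-line swap identity.
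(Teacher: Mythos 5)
Your argument is correct. One caveat on the comparison: this paper does not actually prove Theorem~\ref{thm:degree1} --- it explicitly defers the proof to the reference \cite{FilmusIhringer1} --- so there is no in-paper proof to match against. Judged on its own, your proof is a clean, self-contained, elementary argument: the swap identity $f(x^{(i\,j)})-f(x)=a_j-a_i$ combined with Booleanness forces all pairwise differences $a_j-a_i$ into $\{-1,0,1\}$, hence the coefficients take at most two values differing by exactly $1$; the reduction $f(x)=(c+\alpha k)+|A\cap\operatorname{supp}(x)|$ is right; and the interval count $\min(k,|A|)-\max(0,k-|B|)\le 1$ together with $k\ge 2$ and $n-k\ge 2$ does collapse to $\min(|A|,|B|)\le 1$, exactly as you outline. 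This is genuinely different in spirit from the machinery of the present paper: the general degree-$d$ argument here (hypercontractive bootstrapping plus induction on $n$) would only yield the $d=1$ statement for $C\le k\le n-C$ with some unspecified constant $C$, whereas your direct computation achieves the optimal range $2\le k\le n-2$. Two small presentational points: the sentence claiming the $a_i$ ``already lie in an arithmetic progression $\{\alpha,\alpha+1,\alpha+2\}$'' is an unnecessary detour --- the conclusion that there are at most two distinct values, differing by $1$, follows immediately since three distinct reals cannot have all pairwise differences in $\{-1,0,1\}$; and you should state explicitly that the existence of a swap partner for every ordered pair $(i,j)$ uses only $1\le k\le n-1$, which your hypothesis supplies. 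Neither affects correctness.
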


The bounds on~$k$ in this theorem are optimal, since every function on $\binom{[n]}{1}$ and on $\binom{[n]}{n-1}$ has degree~$1$. In contrast, the bounds on~$k$ in Theorem~\ref{thm:main} are probably not optimal, an issue we discuss in Section~\ref{sec:discussion}.

Let us close this introduction by mentioning a recent result of Keller and Klein~\cite{KellerKlein}, which studies Boolean functions on $\binom{[n]}{k}$ which are $\epsilon$-close to being degree~$d$, where distance is measured using the squared $L_2$~norm. Assuming that $k \leq n/2$, their result states that if $\epsilon < (k/n)^{O(d)}$ then $f$ is $O(\epsilon)$-close to a junta.

\section{Preliminaries} \label{sec:prel}

In this paper, we discuss Boolean functions, which are $0,1$-valued functions, on two different domains: the hypercube and the slice.
We will use the notation $[n] := \{1,\ldots,n\}$. A \emph{degree~$d$ function} (in a context in which degree is defined) is a function of degree \emph{at most}~$d$. 

\paragraph{The hypercube.} The $n$-dimensional hypercube is the domain $\{0,1\}^n$. Every function on the hypercube can be represented uniquely as a multilinear polynomial in the $n$~input arguments $x_1,\ldots,x_n$. The \emph{degree} of a function on the hypercube is the degree of this polynomial. Alternatively, the degree of a function on the hypercube is the minimum degree of a polynomial in $x_1,\ldots,x_n$ which agrees with the function on all points of the hypercube. A function on the hypercube is an \emph{$m$-junta} if it depends on at most $m$~inputs, that is, if there exists a set $I$ of $m$~inputs such that $f(x) = f(y)$ as long as $x_i = y_i$ for all $i \in I$; we also say that $f$ is an $I$-junta. For more information on functions on the hypercube from this perspective, consult O'Donnell's monograph~\cite{RyanODonnell}.

\paragraph{The slice.} Let $0 \leq k \leq n$. The \emph{slice} $\binom{[n]}{k}$ is the subset of $\{0,1\}^n$ consisting of all vectors having Hamming weight~$k$. The slice appears naturally in combinatorics, coding theory, and elsewhere, and is known to algebraic combinatorialists as the \emph{Johnson scheme} $J(n,k)$. Every function on the slice can be represented uniquely as a multilinear polynomial $P$ in the $n$~input arguments $x_1,\ldots,x_n$ of degree at most $\min(k,n-k)$ which satisfies $\sum_{i=1}^n \frac{\partial P}{\partial x_i} = 0$ (the latter condition is known as \emph{harmonicity}). The \emph{degree} of a function on the slice is the degree of this polynomial. Alternatively, the degree of a function on the slice is the minimum degree of a polynomial in $x_1,\ldots,x_n$ (not necessarily multilinear or harmonic) which agrees with the function on all points of the slice.

A function $f$ on the slice is an \emph{$m$-junta} if there exist a function $g\colon\{0,1\}^m \to \{0,1\}$ and $m$~indices $i_1<\ldots<i_m$ such that $f(x) = g(x|_{i_1,\ldots,i_m})$, where $x|_{i_1,\ldots,i_m} = x_{i_1},\ldots,x_{i_m}$. Alternatively, $f$ is an $m$-junta if there exists a set~$I$ of $m$~coordinates such that $f$ is invariant under permutation of the coordinates in~$[n] \setminus I$; we also say that $f$ is an $I$-junta. Note that the set $I$ is not defined uniquely (in contrast to the hypercube case): for example, $f = \sum_{i \in I} x_i$ is both an $I$-junta and an $[n]\setminus I$-junta.

The \emph{$p$th norm} of $f$ is given by $\|f\|_p = \EE[|f|^p]^{1/p}$, where the expectation is over a uniform point in the slice. In particular, $\|f\|_2^2 = \EE[f^2]$.

Let $f$ be a Boolean function on the slice $\binom{[n]}{k}$, and let $P$ be its unique harmonic multilinear polynomial representation. The \emph{$d$th level} of $f$, denoted $f^{=d}$, is the homogeneous degree~$d$ part of~$P$ (the sum of all degree~$d$ monomials with their coefficients). The different levels are orthogonal: $\EE[f^{=d} f^{=e}] = 0$ if $d \neq e$. Orthogonality of the different levels implies that
\[
 \|f\|_2^2 = \sum_{d=0}^{\deg f} \|f^{=d}\|_2^2.
\]

Let $f$ be a Boolean function on the slice $\binom{[n]}{k}$, and let $i,j \in [n]$. We define $f^{(i\;j)}$ to be the function given by $f^{(i\;j)}(x) = f(x^{(i\;j)})$, where $x^{(i\;j)}$ is obtained from~$x$ by switching $x_i$ and $x_j$. The \emph{$(i,j)$th influence} of $f$ is $\Inf_{ij}[f] = \frac{1}{4} \Pr[f(x) \neq f(x^{(i\;j)})]$, where $x$ is chosen uniformly at random over the slice. An equivalent formula is $\Inf_{ij}[f] = \frac{1}{4} \EE[(f-f^{(i\;j)})^2]$. Clearly $\Inf_{ij}[f] = 0$ if and only if $f = f^{(i\;j)}$. The \emph{total influence} of $f$ is $\Inf[f] = \frac{1}{n} \sum_{1 \leq i < j \leq n} \Inf_{ij}[f]$. It is given by the formula
\begin{equation} \label{eq:influence}
 \Inf[f] = \sum_{d=0}^{\deg f} \frac{d(n+1-d)}{n} \|f^{=d}\|_2^2.
\end{equation}

 For a parameter $\rho \in (0,1]$, the \emph{noise operator} $T_\rho$, mapping functions on the slice to functions on the slice, is defined by
\begin{equation} \label{eq:noise}
 T_\rho f = \sum_{d=0}^{\deg f} \rho^{d(1-(d-1)/n)} f^{=d}.
\end{equation}
Alternatively, $(T_\rho f)(x)$ is the expected value of $f(y)$, where $y$ is chosen by applying $N \sim \mathrm{Po}(\frac{n-1}{2} \log (1/\rho))$ random transpositions to~$x$. Lee and Yau~\cite{LeeYau} proved a log~Sobolev inequality, which together with classical results of Diaconis and Saloff-Coste~\cite{DSC} implies that the following \emph{hypercontractive inequality} holds for some constant $C_H>0$:
\begin{equation} \label{eq:hypercontractivity}
 \|T_\rho f\|_2 \leq \|f\|_{4/3}, \qquad \rho = \left(\frac{2k(n-k)}{n(n-1)}\right)^{C_H}.
\end{equation}
For more information on functions on the slice, consult~\cite{fm}.

\section{Main theorem} \label{sec:main}

For the rest of this section, we fix an integer $d \geq 1$. Our goal is to prove Theorem~\ref{thm:main} for this value of~$d$.
We will use the phrase \emph{universal constant} to refer to a constant independent of $d$.

The strategy of the proof is to proceed in three steps:
\begin{enumerate}
 \item Bootstrapping: Every Boolean degree~$d$ function on $\binom{[2n]}{n}$ is a $K^d$-junta.
 \item Induction: If every Boolean degree~$d$ function on $\binom{[n]}{k}$ is an $M$-junta, then the same holds for $\binom{[n+1]}{k}$ and $\binom{[n+1]}{k+1}$ (under certain conditions).
 \item Culmination: If a Boolean degree~$d$ function on $\binom{[n]}{k}$ is an $L$-junta but not an $(L-1)$-junta, then (under certain conditions) there exists a Boolean degree~$d$ function on the hypercube depending on~$L$ coordinates.
\end{enumerate}

We also show a converse to the last step: given a Boolean degree~$d$ function on the hypercube depending on~$L$ coordinates, we show how to construct Boolean degree~$d$ functions on large enough slices that are $L$-juntas but not $(L-1)$-juntas.

\subsection{Bootstrapping} \label{sec:bootstrap}

We bootstrap our approach by proving that every Boolean degree~$d$ function on $\binom{[2n]}{n}$ is a junta. The proof is a simple application of hypercontractivity, and already appears in~\cite{fkmw}. We reproduce a simplified version here in order to make the paper self-contained.

The main idea behind the proof is to obtain a dichotomy on the influences of the function.

\begin{lemma} \label{lem:dichotomy}
 There exists a universal constant $\alpha$ such that all non-zero influences of a Boolean degree~$d$ function on $\binom{[2n]}{n}$ are at least $\alpha^d$.
\end{lemma}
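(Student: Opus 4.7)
The plan is to apply hypercontractivity to the difference function $g := f - f^{(i\;j)}$, exploiting the fact that $f$ being Boolean forces $g$ into $\{-1,0,1\}$, so the $4/3$- and $2$-norms of $g$ differ only by a power of $\Inf_{ij}[f]$. Fix $i,j$ with $\Inf_{ij}[f] > 0$ and view $g$ as a function on $\binom{[2n]}{n}$. Since $f$ and $f^{(i\;j)}$ both have degree at most $d$, so does $g$; since they are equidistributed, $\EE[g]=0$, so $g^{=0}=0$; and since $f$ is Boolean, $g$ is $\{-1,0,1\}$-valued, so pointwise $|g|^{4/3}=g^2$.

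From this I would first extract the key asymmetry between the two norms:
\[
 \|g\|_2^2 = \EE[g^2] = 4\Inf_{ij}[f], \qquad \|g\|_{4/3}^{4/3} = \EE[|g|^{4/3}] = \EE[g^2] = 4\Inf_{ij}[f],
\]
so that $\|g\|_2 = (4\Inf_{ij}[f])^{1/2}$ while $\|g\|_{4/3} = (4\Inf_{ij}[f])^{3/4}$.

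Next I would plug $g$ into the hypercontractive inequality \eqref{eq:hypercontractivity}. On $\binom{[2n]}{n}$ the prescribed parameter is $\rho = (n/(2n-1))^{C_H} \geq (1/2)^{C_H}$, a universal constant. By \eqref{eq:noise}, the component $g^{=e}$ contributes $\rho^{e(1-(e-1)/(2n))} g^{=e}$ to $T_\rho g$; the exponent is at most $e \leq d$, and since $\rho \in (0,1]$ each such coefficient is at least $\rho^d$. Combined with $g^{=0}=0$ and orthogonality of the levels, this yields $\|T_\rho g\|_2 \geq \rho^d \|g\|_2$. Hypercontractivity then gives $\rho^d(4\Inf_{ij}[f])^{1/2} \leq (4\Inf_{ij}[f])^{3/4}$, which rearranges to $\Inf_{ij}[f] \geq \rho^{4d}/4$. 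Taking $\alpha := \rho^4/4$ yields $\Inf_{ij}[f] \geq \alpha^d$ for every $d \geq 1$, as desired.

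There is no real obstacle here: this is the textbook Bourgain/KKL-style hypercontractive dichotomy, transplanted to the slice via the Lee--Yau log-Sobolev inequality recalled in the preliminaries. The only small checks are (i) that $g$ inherits degree $\leq d$ and mean zero, and (ii) that each exponent $e(1-(e-1)/(2n))$ arising in $T_\rho$ is bounded by $d$, so that a single factor of $\rho^d$ can be pulled out of $\|T_\rho g\|_2$; both are immediate.
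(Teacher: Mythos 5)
Your proof is correct and is essentially the paper's argument: apply hypercontractivity to the $\{-1,0,1\}$-valued difference function, use that its $4/3$-norm and $2$-norm are both powers of $\Inf_{ij}[f]$, and lower-bound $\|T_\rho g\|_2$ by $\rho^d\|g\|_2$. The only cosmetic differences are that the paper normalizes the difference as $(f-f^{(i\;j)})/2$ (so the factors of $4$ disappear) and does not bother noting $g^{=0}=0$, which is indeed not needed.
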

\begin{proof}
 Let $f$ be a Boolean degree~$d$ function on $\binom{[2n]}{n}$. Given $i,j \in [n]$, consider the function $f_{ij} = (f - f^{(i\;j)})/2$, related to the $(i,j)$th influence of $f$ by $\Inf_{ij}[f] = \|f_{ij}\|_2^2$. Since $f$ is $0,1$-valued, $f_{ij}$ is $0,\pm 1$-valued. Since $f$ has degree~$d$, it follows that $f_{ij}$ can be written as a degree~$d$ polynomial, and so has degree at most~$d$. Hypercontractivity~\eqref{eq:hypercontractivity} implies that for some universal constant $\rho$, we have
\begin{equation} \label{eq:bootstrapping}
 \|T_\rho f_{ij}\|_2 \leq \|f_{ij}\|_{4/3}. 
\end{equation}
 We can estimate the left-hand side of~\eqref{eq:bootstrapping} using~\eqref{eq:noise}:
\[
 \|T_\rho f_{ij}\|_2^2 = \sum_{e=0}^d \rho^{2e(1-(e-1)/n)} \|f_{ij}^{=e}\|_2^2 \geq \rho^{2d} \sum_{e=0}^d \|f_{ij}^{=e}\|_2^2 = \rho^{2d} \|f_{ij}\|_2^2 = \rho^{2d} \Inf_{ij}[f].
\]
 We can calculate the right-hand side of~\eqref{eq:bootstrapping} using the fact that $f_{ij}$ is $0,\pm 1$-valued:
\[
 \|f_{ij}\|_{4/3}^{4/3} = \EE[|f_{ij}|^{4/3}] = \EE[|f_{ij}|^2] = \|f_{ij}\|_2^2 = \Inf_{ij}[f].
\]
 Combining the estimates on both sides of~\eqref{eq:bootstrapping}, we conclude that
\[
 \rho^{2d} \Inf_{ij}[f] \leq \Inf_{ij}[f]^{3/2}.
\]
 Hence either $\Inf_{ij}[f] = 0$ or $\Inf_{ij}[f] \geq \rho^{4d}$.
\end{proof}

The next step is to prove a degenerate triangle inequality (for the non-degenerate version, see Wimmer~\cite[Lemma 5.4]{Wimmer} and Filmus~\cite[Lemma 5.1]{filmus4}).

\begin{lemma} \label{lem:triangle-inequality}
 Let $f$ be a function on a slice. If $\Inf_{ik}[f] = \Inf_{jk}[f] = 0$ then $\Inf_{ij}[f] = 0$.
\end{lemma}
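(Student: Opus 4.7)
The plan is to translate the influence statement into a statement about symmetry. Recall that by the definition of $(i,j)$th influence via $\|f - f^{(i\;j)}\|_2^2$, the condition $\Inf_{ij}[f] = 0$ is equivalent to $f = f^{(i\;j)}$, i.e., $f$ is invariant under the transposition $(i\;j)$ acting on the coordinates. So the lemma reduces to: if $f$ is invariant under $(i\;k)$ and under $(j\;k)$, then $f$ is invariant under $(i\;j)$.

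From here I would simply invoke the identity $(i\;j) = (i\;k)(j\;k)(i\;k)$ in the symmetric group, which is verified by tracking where each of $i,j,k$ is sent: $i \mapsto k \mapsto j \mapsto j$, $j \mapsto j \mapsto k \mapsto i$, and $k \mapsto i \mapsto i \mapsto k$. Since the group of permutations under which $f$ is invariant is closed under composition, invariance under $(i\;k)$ and $(j\;k)$ automatically implies invariance under their product $(i\;k)(j\;k)(i\;k) = (i\;j)$, hence $\Inf_{ij}[f] = 0$.

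There is essentially no technical obstacle here; the proof is a one-line group-theoretic observation once one recognises that vanishing swap influence is the same as transposition invariance. The only thing worth double-checking is that $i,j,k$ are meant to be distinct (otherwise the statement is trivially true or vacuous), and that the notion of influence stated in the preliminaries really does coincide with invariance under the transposition, which is immediate from the formula $\Inf_{ij}[f] = \tfrac14 \EE[(f-f^{(i\;j)})^2]$.
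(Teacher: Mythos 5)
Your proof is correct and coincides with the paper's: both reduce vanishing influence to transposition invariance and then use the identity $(i\;j) = (i\;k)(j\;k)(i\;k)$. No issues.
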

\begin{proof}
 If $\Inf_{ik}[f] = \Inf_{jk}[f] = 0$ then $f(x) = f(x^{(i\;k)}) = f(x^{(j\;k)})$, and so $f(x) = f(x^{(i\;k)(j\;k)(i\;k)}) = f(x^{(i\;j)})$. It follows that $\Inf_{ij}[f] = 0$.
\end{proof}

To complete the proof, we use formula~\eqref{eq:influence}, which implies that $\Inf[f] \leq d$ for any Boolean degree~$d$ function $f$, together with an idea of Wimmer~\cite[Proposition 5.3]{Wimmer}.

\begin{lemma} \label{lem:bootstrapping}
 There exists a universal constant $K > 1$ such that for $n \geq 2$, every Boolean degree~$d$ function on $\binom{[2n]}{n}$ is a $K^d$-junta.
\end{lemma}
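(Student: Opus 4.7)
The plan is to partition $[2n]$ into equivalence classes of ``coordinates that look the same to $f$'', observe that $f$ is a junta on the complement of the largest class, and control the size of that complement using the influence dichotomy together with the total-influence bound from formula~\eqref{eq:influence}.

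Concretely, define $i \sim j$ to mean $\Inf_{ij}[f] = 0$, which is reflexive and symmetric. Applying Lemma~\ref{lem:triangle-inequality} with the common index taken to be $j$ shows that $\sim$ is transitive, so it is an equivalence relation. Let $C_1, \ldots, C_m$ be its classes, ordered so that $|C_1| \geq |C_2| \geq \cdots$. Since every transposition within $C_1$ fixes $f$, and such transpositions generate the symmetric group on $C_1$, the function $f$ is invariant under permutations of $C_1$; equivalently, $f$ is an $I$-junta with $I = [2n] \setminus C_1$. It therefore suffices to bound $L := 2n - |C_1|$ by $K^d$.

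For this I would combine both sides of the total influence. Formula~\eqref{eq:influence}, applied to the ground set $[2n]$, yields $\Inf[f] \leq d \|f\|_2^2 \leq d$, that is, $\sum_{i<j} \Inf_{ij}[f] \leq 2nd$. By Lemma~\ref{lem:dichotomy}, every pair $(i,j)$ lying in distinct classes contributes at least $\alpha^d$, so if $N$ denotes the number of cross-class pairs then $\alpha^d N \leq 2nd$. Counting just the pairs between $C_1$ and its complement gives $N \geq |C_1|\, L$. If $|C_1| \geq n$, this immediately gives $\alpha^d n L \leq 2nd$, hence $L \leq 2d/\alpha^d$. If instead $|C_1| < n$, the crude estimate $\sum_r \binom{|C_r|}{2} \leq \tfrac{|C_1|}{2} \sum_r |C_r| = n|C_1| < n^2$ implies $N \geq \binom{2n}{2} - n^2 \geq n(n-1)$, which forces $n - 1 \leq 2d/\alpha^d$ and hence $L \leq 2n \leq 2 + 4d/\alpha^d$.

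Either way $L = O(d/\alpha^d)$, which is at most $K^d$ for any universal $K > 1/\alpha$ chosen large enough to absorb the polynomial factor in $d$ (small $d$ being handled trivially). The only real obstacle is this case split, which appears because the clean lower bound $|C_1|\, L \geq n L$ on cross-class pairs is only available when the largest class fills at least half of the domain; otherwise one has to fall back on a direct estimate of the within-class pair count. Everything else is a clean assembly of Lemmas~\ref{lem:dichotomy} and~\ref{lem:triangle-inequality} with formula~\eqref{eq:influence}.
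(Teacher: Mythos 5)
Your argument is correct. It uses the same three ingredients as the paper --- the influence dichotomy (Lemma~\ref{lem:dichotomy}), the degenerate triangle inequality (Lemma~\ref{lem:triangle-inequality}), and the bound $\Inf[f] \leq d$ from~\eqref{eq:influence} --- but assembles them differently. The paper builds the graph of pairs with $\Inf_{ij}[f] \geq \alpha^d$, takes a maximal matching, and uses the resulting vertex cover as the junta set, bounding its size by counting edges incident to matching edges. You instead observe that Lemma~\ref{lem:triangle-inequality} says precisely that ``$\Inf_{ij}[f]=0$'' is transitive, so the zero-influence relation partitions $[2n]$ into classes; $f$ is symmetric in the largest class $C_1$, and the junta set is its complement, whose size is controlled by counting cross-class pairs. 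Your route makes the underlying structure (an equivalence relation rather than a mere graph) explicit, and it yields the slightly sharper bound $O(d/\alpha^d)$ rather than $O(1/\alpha^d)$ times a matching-counting slack; the paper's route avoids naming the equivalence classes and is marginally shorter to state. One cosmetic remark: your case split on whether $|C_1| \geq n$ is unnecessary. Writing $c_r = |C_r|$, the number of within-class pairs is $\sum_r \binom{c_r}{2} \leq \frac{c_1-1}{2}\sum_r c_r = n(c_1-1)$, so the number of cross-class pairs is at least $\binom{2n}{2} - n(c_1-1) = n(2n-c_1) = nL$ in all cases, and $\alpha^d nL \leq 2nd$ gives $L \leq 2d/\alpha^d$ directly.
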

\begin{proof}
 Let $f$ be a Boolean degree~$d$ function on $\binom{[2n]}{n}$. Construct a graph $G$ on the vertex set $[2n]$ by connecting two vertices $i,j$ if $\Inf_{ij}[f] \geq \alpha^d$, where $\alpha$ is the constant from Lemma~\ref{lem:dichotomy}. Let $M$ be a maximal matching in $G$. It is well-known that the $2|M|$ vertices of $M$ form a vertex cover $V$, that is, any edge of $G$ touches one of these vertices. Therefore if $i,j \notin V$ then $\Inf_{ij}[f] < \alpha^d$, and so $\Inf_{ij}[f] = 0$ according to Lemma~\ref{lem:dichotomy}. In other words, $f$ is a $V$-junta. It remains to bound the size of $V$.
 
 Let $(i,j)$ be any edge of $M$, and let~$k$ be any other vertex. Lemma~\ref{lem:triangle-inequality} shows that either $\Inf_{ik}[f] \neq 0$ or $\Inf_{jk}[f] \neq 0$, and so either $(i,k)$ or $(j,k)$ is an edge of $G$, according to Lemma~\ref{lem:dichotomy}. It follows that $G$ contains at least $|M|(2n-2)/2$ edges (we divided by two since some edges could be counted twice). Therefore $n\Inf[f] = \sum_{1 \leq i < j \leq n} \Inf_{ij}[f] \geq \alpha^d |M| (n-1)$. On the other hand,~\eqref{eq:influence} shows that
 \[
  \Inf[f] = \sum_{e=0}^d \frac{e(n+1-e)}{n} \|f^{=e}\|_2^2 \leq d \sum_{e=0}^d \|f^{=e}\|_2^2 = d \|f\|_2^2 \leq d. 
 \]
 It follows that
\[
 |V| = 2|M| \leq 2 \frac{n}{n-1} (1/\alpha)^d \leq (4/\alpha)^d. \qedhere
\]
\end{proof}

\subsection{Induction} \label{sec:induction}

The heart of the proof is an inductive argument which shows that if Theorem~\ref{thm:main} holds (with a non-optimal bound on the size of the junta) for the slice $\binom{[n]}{k}$, then it also holds for the slices $\binom{[n+1]}{k}$ and $\binom{[n+1]}{k+1}$, assuming that $n$ is large enough and that $k$ is not too close to~$0$ or~$n$. Given a Boolean degree~$d$ function $f$ on $\binom{[n+1]}{k}$ or $\binom{[n+1]}{k+1}$, the idea is to consider restrictions of $f$ obtained by fixing one of the coordinates.

\begin{lemma} \label{lem:restrictions}
 Suppose that every Boolean degree~$d$ function on $\binom{[n]}{k}$ is an $M$-junta, where $M \geq 1$.
 
 Let $f$ be a Boolean degree~$d$ function on $\binom{[n+1]}{k+b}$, where $b \in \{0,1\}$. For each $i \in [n+1]$, let $f_i$ be the restriction of $f$ to vectors satisfying $x_i = b$. Then for each $i \in [n+1]$ there exists a set $S_i \subseteq [n+1] \setminus \{i\}$ of size at most $M$ and a function $g_i\colon \{0,1\}^{S_i} \to \{0,1\}$, depending on all inputs, such that $f_i(x) = g_i(x|_{S_i})$.
\end{lemma}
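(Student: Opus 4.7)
The plan is to reduce this to a direct application of the inductive hypothesis. After fixing $x_i = b$, the function $f_i$ lives on the smaller slice $\binom{[n+1]\setminus\{i\}}{k}$—note that in both cases $b=0$ and $b=1$ the number of remaining ones is exactly $k$—which is isomorphic to $\binom{[n]}{k}$ by any order-preserving identification of $[n+1]\setminus\{i\}$ with $[n]$. Once I verify that $f_i$ inherits degree at most $d$, the hypothesis immediately supplies an $M$-junta representation, and the final polish amounts to shrinking this representation until the representing Boolean function depends on every one of its inputs on the hypercube.

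First I would argue that $f_i$ has degree at most $d$. Take any polynomial $P$ in $x_1,\ldots,x_{n+1}$ that represents $f$ on $\binom{[n+1]}{k+b}$ (for instance, the unique harmonic multilinear representation) and substitute $x_i = b$. The resulting polynomial in the remaining $n$ variables has degree at most $d$ and agrees with $f_i$ at every point of $\binom{[n+1]\setminus\{i\}}{k}$. By the alternative definition of degree on the slice—minimum degree of any agreeing polynomial, with no harmonicity or multilinearity demanded—$f_i$ has degree at most $d$. The substituted polynomial will generally fail to be harmonic on the smaller slice, but this is irrelevant for the alternative definition.

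Next I would invoke the hypothesis on $f_i$, viewed as a Boolean degree $d$ function on a slice of the form $\binom{[n]}{k}$: this yields a set $T_i \subseteq [n+1]\setminus\{i\}$ with $|T_i| \leq M$ and a function $h_i\colon\{0,1\}^{T_i} \to \{0,1\}$ such that $f_i(x) = h_i(x|_{T_i})$ on the domain of $f_i$. To obtain the stronger conclusion that the representing function depends on all of its inputs, I would iteratively prune: whenever $h_i$ does not depend on some coordinate $j \in T_i$ as a function on the hypercube, delete $j$ and pass to the induced function on $\{0,1\}^{T_i \setminus \{j\}}$, which continues to represent $f_i$. Terminating this process produces the required $S_i \subseteq T_i$ of size at most $M$ together with a $g_i$ depending on all of its inputs.

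There is no deep obstacle here: the lemma is essentially a bookkeeping exercise. The only conceptual care is to keep the two definitions of degree straight (the harmonic multilinear representative is \emph{not} preserved by setting a variable to a constant, but degree in the alternative sense is), and to remember that the junta set of a function on the slice is not uniquely determined, so the pruning step produces \emph{some} minimal $S_i$ rather than a canonical one.
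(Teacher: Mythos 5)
Your proposal is correct and follows essentially the same route as the paper's proof: restrict to the isomorphic slice $\binom{[n]}{k}$, observe that substituting $x_i=b$ into a degree-$d$ representing polynomial certifies $\deg f_i \le d$ via the non-harmonic definition of degree, apply the hypothesis to get $T_i$ and $h_i$, and shrink $T_i$ to the set $S_i$ of coordinates $h_i$ actually depends on. The extra care you take with the two definitions of degree is a correct elaboration of the paper's one-line justification.
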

\begin{proof}
 Choose $i \in [n+1]$. The domain of $f_i$ is isomorphic to $\binom{[n]}{k}$. Moreover, since $f$ can be represented as a polynomial of degree~$d$, so can $f_i$, hence $\deg f_i \leq d$. By assumption, $f_i$ is an $M$-junta, and so $f_i(x) = h_i(x|_{T_i})$ for some set $T_i$ of $M$ indices and some function $h_i\colon \{0,1\}^{T_i} \to \{0,1\}$. Let $S_i \subseteq T_i$ be the set of inputs that $h_i$ depends on. Then there exists a function $g_i\colon \{0,1\}^{S_i} \to \{0,1\}$ such that $g_i(x|_{S_i}) = h_i(x|_{T_i})$, completing the proof.
\end{proof}

Each of the sets $S_i$ individually contains at most $M$ indices. We now show that in fact they contain at most $M$ indices \emph{in total}.

\begin{lemma} \label{lem:union}
 Under the assumptions of Lemma~\ref{lem:restrictions}, suppose further that $n \geq (M+1)^3$ and $M+2 \leq k \leq n+1-(M+2)$.
 
 The union of any $M+1$ of the sets $S_i$ contains at most $M$ indices.
\end{lemma}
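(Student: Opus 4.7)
The plan is to reach a contradiction from $|U|\geq M+1$, where $U:=\bigcup_{j=1}^{M+1}S_{i_j}$, via a two-point argument. Given distinct indices $A,B,u\in[n+1]$ with $u\in S_A$ and $u\notin S_B$, I aim to produce $x^0,x^1\in\binom{[n+1]}{k+b}$ with $x^0_A=x^1_A=x^0_B=x^1_B=b$, $x^0_u=0$, $x^1_u=1$, coinciding on all coordinates outside $\{u\}\cup R$, where $R:=[n+1]\setminus(S_A\cup S_B\cup\{A,B,u\})$. Since $u\notin S_B$ and $R\cap S_B=\emptyset$, the pair agrees on $S_B$, so Lemma~\ref{lem:restrictions} gives $f(x^0)=f_B(x^0)=f_B(x^1)=f(x^1)$; on the other hand, the common restriction $v:=x^0|_{S_A\setminus\{u\}}$ can be chosen so that $g_A(v,0)\ne g_A(v,1)$ (using $g_A$'s dependence on $u$), yielding $f(x^0)\ne f(x^1)$---a contradiction.

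I locate $(A,B,u)$ by case analysis. \emph{Case~1:} there exist $j\in[M+1]$ and $u\in U\setminus(S_{i_j}\cup\{i_j\})$; picking any $k$ with $u\in S_{i_k}$, I set $A=i_k$, $B=i_j$. \emph{Case~2:} $U\subseteq S_{i_j}\cup\{i_j\}$ for every $j$; combined with $S_{i_j}\subseteq U$ and $|S_{i_j}|\le M$, this forces $|U|=M+1$, $U=\{i_1,\ldots,i_{M+1}\}$, and $S_{i_j}=U\setminus\{i_j\}$. Since $n+1>M+1$, I pick $i^*\in[n+1]\setminus U$; Lemma~\ref{lem:restrictions} gives $|S_{i^*}|\le M$, and since the $M+1$ distinct sets $U\setminus\{i_{j'}\}$ can coincide with $S_{i^*}$ for at most one $j'$, I pick $j'$ with $S_{i^*}\ne U\setminus\{i_{j'}\}$. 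A size comparison then gives $U\setminus\{i_{j'}\}\not\subseteq S_{i^*}$, so some $u\in S_{i_{j'}}\setminus S_{i^*}$ exists; $u\in U$ while $i^*\notin U$ forces $u\ne i^*$. I set $A=i_{j'}$, $B=i^*$.

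The main obstacle is realizing $(x^0,x^1)$ in $\binom{[n+1]}{k+b}$: the weights that $x^0,x^1$ must carry on $R$ are determined by the values fixed elsewhere, and must lie in $[0,|R|]$. The hypothesis $n\geq(M+1)^3$ ensures $|R|\geq n-2M-1$ is comfortably large, and $M+2\le k\le n+1-(M+2)$ keeps the target weights away from the extremes, so that a balanced choice of $v$ on $S_A\setminus\{u\}$ and of a shared value $y$ on $S_B\setminus(S_A\cup\{A\})$ solves the weight equation. A subtle point to address along the way is the case $B\in S_A$, where the required value $v_B$ is pinned to $b$ and could clash with the demand $g_A(v,0)\ne g_A(v,1)$; this is vacuous in Case~2 (there $B=i^*\notin U\supseteq S_A$), and in Case~1 it can be handled by a careful refinement of the choice of $k$ among the candidates with $u\in S_{i_k}$.
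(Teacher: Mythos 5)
Your overall strategy --- produce two points of $\binom{[n+1]}{k+b}$ that agree on $S_B$ but on which a witness for $g_A$'s dependence on $u$ forces different values --- is the same engine that drives the paper's proof; the paper fixes a reference coordinate $r$ outside $A\cup\{1,\ldots,M+1\}$ (in its notation) and packages the two-point argument as a count of transpositions to which $f_r$ is sensitive, while you run it as a direct contradiction. Your weight-balancing discussion and your Case~2 are sound. The gap is in Case~1.

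There you take $B=i_j$ and $A=i_k$ with $u\in S_{i_k}\setminus(S_{i_j}\cup\{i_j\})$, and you correctly flag the scenario $B\in S_A$: the witness pair for $g_A$'s dependence on $u$ lives in $\{0,1\}^{S_A}$ and may require $x_B=1-b$, which is incompatible with placing the points in the domain of $f_B$. You assert this ``can be handled by a careful refinement of the choice of $k$'', but no refinement is given and none is available from within your setup: there may be a \emph{single} $k$ with $u\in S_{i_k}$, it may satisfy $i_j\in S_{i_k}$, and $g_{i_k}$ may depend on $u$ only through inputs with $x_{i_j}=1-b$ --- Lemma~\ref{lem:restrictions} guarantees \emph{some} witness but gives no control over its other coordinates. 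So Case~1, as written, does not close. The repair is the device you already use in Case~2 (and the paper uses throughout): choose the reference index outside everything \emph{first}. Pick $r\in[n+1]\setminus(U\cup\{i_1,\ldots,i_{M+1}\})$, which exists since $|U\cup\{i_1,\ldots,i_{M+1}\}|\le (M+1)^2\le n$; then pick $u\in U\setminus S_r$, which exists since $|U|\ge M+1>|S_r|$; then pick $A$ with $u\in S_A$ and set $B:=r$. Now $B\notin U\supseteq S_A$ and $u\notin S_B$ hold automatically, the conflict never arises, and the case split disappears. With that change your argument goes through as an elementary, non-counting rendering of the paper's bound on the sensitive transpositions of $f_r$.
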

\begin{proof}
 We can assume, without loss of generality, that the sets in question are $S_1,\ldots,S_{M+1}$. Denote their union by $A$, and let $B = A \cup \{1,\ldots,M+1\}$. Since $|B| \leq (M+1)^2 \leq n$, there exists a point $r \in [n+1] \setminus B$. We proceed by bounding the number of unordered pairs of distinct indices $i,j \in [n+1] \setminus \{r\}$ such that $f_r = f_r^{(i\;j)}$, which we denote by $N$. Since $f_r$ is an $M$-junta, we know that $N \geq \binom{n-M}{2}$. We will now obtain an upper bound on $N$ in terms of $|A|$ and $|B|$.
 
 Let $1 \leq \ell \leq M+1$, and suppose that $i \in S_\ell$ and $j \notin S_\ell \cup \{\ell,r\}$. We claim that $f_r^{(i\;j)} \neq f_r$. Indeed, since $g_\ell$ depends on all inputs, there are two inputs $y,z$ to $g_\ell$, differing only on the $i$th coordinate, say $y_i = b$ and $z_i = 1-b$, such that $g_\ell(y) \neq g_\ell(z)$. Since $M+2 \leq k \leq n+1-(M+2)$, we can extend $y$ to an input $x$ to $f$ satisfying additionally the constraints $x_\ell = x_r = b$ and $x_j = 1-b$. Since $x_\ell = x_r = b$, the input $x$ is in the common domain of $f_\ell$ and $f_r$. Notice that $f_r(x) = f_\ell(x) = g_\ell(y)$, whereas $f_r(x^{(i\;j)}) = f_\ell(x^{(i\;j)}) = g_\ell(z)$, since $x_i = y_i = b$ whereas $x_j = 1-b$. By construction $g_\ell(y) \neq g_\ell(z)$, and so $f_r \neq f_r^{(i\;j)}$.
 
 The preceding argument shows that if $i \in A$ and $j \notin B \cup \{r\}$ then $f_r \neq f_r^{(i\;j)}$. Therefore $\binom{n}{2} - N \geq |A|(n-|B|) \geq |A|(n - (M+1)^2)$. Combining this with the lower bound $N \geq \binom{n-M}{2}$, we deduce that
\[
 |A| (n - (M+1)^2) \leq \binom{n}{2} - \binom{n-M}{2} = \frac{M(2n-M-1)}{2}.
\]
 Rearrangement shows that
\[
 |A| \leq \frac{M(2n-M-1)}{2(n-(M+1)^2)} =
 \left(1 + \frac{(M+1)(2M+1)}{2n-2(M+1)^2}\right)M.
\]
 When $n > (M^2 + (3/2)M + 1)(M+1)$, we have $\frac{(M+1)(2M+1)}{2n-2(M+1)^2} < \frac{1}{M}$, and so $|A| < M+1$. We conclude that when $n \geq (M+1)^3$, we have $|A| \leq M$.
\end{proof}

\begin{corollary} \label{cor:union}
 Under the assumptions of the preceding lemma, the union of $S_1,\ldots,S_{n+1}$ contains at most $M$ indices.
\end{corollary}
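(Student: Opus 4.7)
The plan is to deduce the corollary from Lemma~\ref{lem:union} by a direct contradiction argument: assume the total union is larger than $M$, and use this to produce $M+1$ of the sets whose union is also larger than $M$, violating the lemma.

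More concretely, let $U = \bigcup_{i=1}^{n+1} S_i$ and suppose for contradiction that $|U| \geq M+1$. I would pick $M+1$ distinct elements $a_1,\ldots,a_{M+1} \in U$ and, for each $t$, choose some index $i_t \in [n+1]$ with $a_t \in S_{i_t}$. The collection $S_{i_1},\ldots,S_{i_{M+1}}$ then has union containing $\{a_1,\ldots,a_{M+1}\}$, hence of size at least $M+1$. This is already the right kind of inequality; the only subtlety is that Lemma~\ref{lem:union} is phrased for a choice of $M+1$ of the $S_i$'s indexed by distinct $i \in [n+1]$, whereas the $i_t$'s produced above might repeat.

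To handle this, I would observe that if the $i_t$'s are not all distinct, we may replace duplicates by fresh indices drawn from $[n+1]$: since $n+1 \geq (M+1)^3 + 1 > M+1$, there are always enough indices available outside any small set. Adjoining sets $S_j$ to the family only enlarges the union, so the resulting family of $M+1$ distinct indices still has union of size $\geq M+1$, directly contradicting Lemma~\ref{lem:union}.

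There is essentially no obstacle here — it is a bookkeeping observation on top of Lemma~\ref{lem:union}. The only mild care needed is the index-distinctness point above, and verifying that the hypotheses of Lemma~\ref{lem:union} ($n \geq (M+1)^3$ and the two-sided bound on $k$) are inherited from the hypotheses of the corollary so that the lemma is applicable to whichever $M+1$-tuple of indices we end up using.
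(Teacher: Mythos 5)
Your proposal is correct and matches the paper's proof: pick $M+1$ elements of the union, assign each to a set $S_{j_t}$ containing it, and contradict Lemma~\ref{lem:union}. Your extra care about possible repetitions among the $j_t$ (padding with fresh indices, which only enlarges the union) is a point the paper glosses over, but the argument is the same.
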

\begin{proof}
 Suppose that the union contained at least $M+1$ indices $i_1,\ldots,i_{M+1}$. Each index $i_t$ is contained in some set $S_{j_t}$, and in particular the union of $S_{j_1},\ldots,S_{j_{M+1}}$ contains at least $M+1$ indices, contradicting the lemma.
\end{proof}

Denoting the union of all $S_i$ by $S$, it remains to show that $f$ is an $S$-junta.

\begin{lemma} \label{lem:induction}
 Suppose that every Boolean degree~$d$ function on $\binom{[n]}{k}$ is an $M$-junta, where $M \geq 1$; that $n \geq (M+1)^3$; and that $M+2 \leq k \leq n+1-(M+2)$.
 
 Any Boolean degree~$d$ function on $\binom{[n+1]}{k}$ or on $\binom{[n+1]}{k+1}$ is an $M$-junta.
\end{lemma}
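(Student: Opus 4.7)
The strategy is to combine Corollary~\ref{cor:union} with an invariance argument under transpositions away from the junta coordinates. Let $f$ be a Boolean degree~$d$ function on $\binom{[n+1]}{k+b}$, and set $S := \bigcup_{i=1}^{n+1} S_i$, where the $S_i$ are the sets produced by Lemma~\ref{lem:restrictions}. By Corollary~\ref{cor:union}, $|S| \leq M$, so it suffices to show that $f$ is an $S$-junta.

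The heart of the argument is to show $f^{(i\;j)} = f$ for every pair of indices $i, j \notin S$. Given such $i, j$ and any $x \in \binom{[n+1]}{k+b}$, the case $x_i = x_j$ is trivial (the swap is the identity), so assume without loss of generality that $x_i = b$ and $x_j = 1-b$. The hypothesis $M+2 \leq k \leq n+1-(M+2)$ guarantees that $x$ carries the value $b$ on at least $M+2$ coordinates; removing at most $M$ of these lying in $S$ and the single $b$-valued coordinate $i$ still leaves some index $\ell \notin S \cup \{i, j\}$ with $x_\ell = b$. Both $x$ and $x^{(i\;j)}$ then lie in the domain of $f_\ell$, and since $S_\ell \subseteq S$ is disjoint from $\{i, j\}$, one obtains
\[
 f(x) = f_\ell(x) = g_\ell(x|_{S_\ell}) = g_\ell(x^{(i\;j)}|_{S_\ell}) = f_\ell(x^{(i\;j)}) = f(x^{(i\;j)}).
\]

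To finish, note that any two points $x, y \in \binom{[n+1]}{k+b}$ with $x|_S = y|_S$ have the same number of $b$'s outside $S$, so $y$ differs from $x$ by a permutation of the coordinates in $[n+1] \setminus S$; writing this permutation as a product of transpositions $(i\;j)$ with $i, j \notin S$ and iterating the invariance just established yields $f(x) = f(y)$, showing that $f$ is an $S$-junta. The main subtle point is the pigeonhole count guaranteeing that a ``witness'' $\ell$ always exists whenever the transposition is non-trivial; this is precisely where the bounds $k \geq M+2$ and $k \leq n+1-(M+2)$ are used sharply, and it is the only place in the proof where the condition on $k$ enters directly (the bound $n \geq (M+1)^3$ was already absorbed into Corollary~\ref{cor:union}).
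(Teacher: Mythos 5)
Your proposal is correct and follows essentially the same route as the paper: both reduce to Corollary~\ref{cor:union} to get $|S|\leq M$ and then patch the restrictions $f_\ell$ together using the bounds on $k$ to find a witness coordinate with value $b$ outside $S$. The only cosmetic difference is that you establish invariance of $f$ under all transpositions $(i\;j)$ with $i,j\notin S$, whereas the paper fixes a single reference coordinate $r\notin S$ and shows directly that $f(x)=g_r(x|_{S_r})$; both patching steps use the same counting argument.
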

\begin{proof}
 Let $b,f_i,g_i,S_i$ be defined as in Lemma~\ref{lem:restrictions}, and let $S$ denote the union of $S_1,\ldots,S_{n+1}$. Corollary~\ref{cor:union} shows that $|S| \leq M$. Since $n \geq (M+1)^3$, it follows that there exists an index $r \in [n+1] \setminus S$. We will show that $f(x) = g_r(x|_{S_r})$, and so $f$ is an $M$-junta.
 
 Consider any input $x$ to $f$. If $x_r = b$ then $x$ is in the domain of $f_r$, and so clearly $f(x) = f_r(x) = g_r(x|_{S_r})$. Suppose therefore that $x_r = 1-b$. Since $M+2 \leq k \leq n+1-(M+2)$, there exists a coordinate $s \in [n+1] \setminus S$ such that $x_s = b$, putting $x$ in the domain of $f_s$. Again since $M+2 \leq k \leq n+1-(M+2)$, there exists a coordinate $t \in [n+1] \setminus (S \cup \{s\})$ such that $x_t = b$. Since $x^{(r\;t)}$ is in the domain of $f_r$, we have
\[
 f(x) = f_s(x) = g_s(x|_{S_s}) = g_s(x^{(r\;t)}_{S_s}) = f_s(x^{(r\;t)}) = f_r(x^{(r\;t)}) = g_r(x^{(r\;t)}|_{S_r}) = g_r(x|_{S_r}). \qedhere
\]
\end{proof}

\subsection{Culmination} \label{sec:culmination}

Combining Lemma~\ref{lem:bootstrapping} and Lemma~\ref{lem:induction}, we obtain a version of Theorem~\ref{thm:main} with a suboptimal upper bound on the size of the junta.

\begin{lemma} \label{lem:main}
 There exists a universal constant $C > 1$ such that whenever $C^d \leq k \leq n - C^d$, every Boolean degree~$d$ function on $\binom{[n]}{k}$ is a $C^d$-junta.
\end{lemma}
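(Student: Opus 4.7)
The plan is to thread together the two preceding lemmas: Lemma~\ref{lem:bootstrapping} supplies a $K^d$-junta bound on every balanced slice $\binom{[2k]}{k}$, and Lemma~\ref{lem:induction}, applied in its weight-preserving conclusion, will transport that bound from $\binom{[N]}{k}$ to $\binom{[N+1]}{k}$ one coordinate at a time. Iterating from $N = 2k$ up to $N = n$ yields the statement, provided the universal constant $C$ is chosen large enough that the numerical preconditions of Lemma~\ref{lem:induction} hold uniformly along the entire chain.

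First I would reduce to the case $2k \leq n$. The substitution $x_i \mapsto 1-x_i$ sends any multilinear polynomial to one of the same total degree, so it identifies Boolean degree~$d$ functions on $\binom{[n]}{k}$ with Boolean degree~$d$ functions on $\binom{[n]}{n-k}$, and clearly preserves being a $J$-junta for every $J \subseteq [n]$. Hence I may swap $k$ for $n-k$ if needed and assume henceforth that $2k \leq n$.

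Set $M := K^d$, with $K$ the universal constant of Lemma~\ref{lem:bootstrapping}. Since $k \geq C^d \geq 2$, that lemma gives that every Boolean degree~$d$ function on $\binom{[2k]}{k}$ is an $M$-junta. I then apply Lemma~\ref{lem:induction} repeatedly, each time in its "$k$-fixed" conclusion, to step through $\binom{[2k]}{k}, \binom{[2k+1]}{k}, \ldots, \binom{[n]}{k}$. At every step $\binom{[N]}{k} \to \binom{[N+1]}{k}$ the preconditions read $N \geq (M+1)^3$ and $M+2 \leq k \leq N-M-1$; because $N \geq 2k$ throughout and $N$ eventually reaches $n-1$, these reduce to $2k \geq (K^d+1)^3$, $k \geq K^d + 2$, and $k \leq n - K^d - 1$. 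Using the crude bound $(K^d+1)^3 \leq 8 K^{3d}$ (valid since $K > 1$), a single universal choice such as $C = 4K^3$ ensures that $C^d$ exceeds each of $(K^d+1)^3/2$, $K^d+2$, and $K^d+1$ for every $d \geq 1$, so all three preconditions follow from the hypotheses $C^d \leq k \leq n - C^d$. Since $M = K^d \leq C^d$, the iterated conclusion is precisely that $f$ is a $C^d$-junta.

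There is no substantive obstacle here: the proof is simply Lemmas~\ref{lem:bootstrapping} and~\ref{lem:induction} applied in sequence, and the only real task is the numerical bookkeeping that confirms a single universal~$C$ validates every precondition along the inductive path.
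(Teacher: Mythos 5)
Your proposal is correct and follows essentially the same route as the paper: bootstrap on the balanced slice $\binom{[2k]}{k}$ via Lemma~\ref{lem:bootstrapping}, then iterate the weight-preserving case of Lemma~\ref{lem:induction} from $\binom{[2k]}{k}$ up to $\binom{[n]}{k}$, with only a different (equally valid) choice of the universal constant $C$ and a slightly more explicit complementation argument for the case $k > n/2$.
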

\begin{proof}
 Let $K$ be the constant from Lemma~\ref{lem:bootstrapping}, and let $M = K^d$. We choose $C := (K+2)^3$.
 
 Let us assume that $k \leq n/2$ (the proof for $k \geq n/2$ is very similar). Lemma~\ref{lem:bootstrapping} shows that every Boolean degree~$d$ function on $\binom{[2k]}{k}$ is an $M$-junta. If $m \geq 2k$ then $m \geq 2k \geq (M+1)^3$ and $M+2 \leq k \leq m-(M+2)$. Therefore Lemma~\ref{lem:induction} shows that if every Boolean degree~$d$ function on $\binom{[m]}{k}$ is an $M$-junta, then the same holds for $\binom{[m+1]}{k}$. Applying the lemma $n-2k$ times, we conclude that every Boolean degree~$d$ function on $\binom{[n]}{k}$ is a $C^d$-junta.
\end{proof}

To complete the proof of the theorem, we show how to convert a Boolean degree~$d$ function on the slice depending on many coordinates to a Boolean degree~$d$ function on the hypercube depending on the same number of coordinates.

\begin{lemma} \label{lem:slice-to-cube}
 Suppose that $f$ is a Boolean degree~$d$ function on $\binom{[n]}{k}$ which is an $L$-junta but not an $(L-1)$-junta, where $k \leq L \leq n-k$. Then there exists a Boolean degree~$d$ function $g$ on $\{0,1\}^L$ which depends on all coordinates.
\end{lemma}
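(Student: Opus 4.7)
The plan is to construct $g$ by pulling $f$ back along an affine embedding of $\{0,1\}^L$ into the slice. Let $J$ be a junta set for $f$ with $|J|=L$, so that $f(x) = h(x|_J)$ on $\binom{[n]}{k}$. Using the hypothesis $L\leq n-k$, I will pick a subset $T\subseteq [n]\setminus J$ together with a bijection $\sigma\colon J\to T$, and define an affine map $\phi\colon\{0,1\}^L\to\{0,1\}^n$ by $\phi(y)_{j_\ell}=y_\ell$, $\phi(y)_{\sigma(j_\ell)}=1-y_\ell$, and $\phi(y)_i=c_i$ on $[n]\setminus(J\cup T)$, where the constants $c_i\in\{0,1\}$ are chosen so that $|\phi(y)|=k$ for every $y\in\{0,1\}^L$. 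Each pair $(j_\ell,\sigma(j_\ell))$ contributes exactly one $1$ to the Hamming weight of $\phi(y)$ independently of $y$, so the fixed part must carry weight $k-L$; the hypothesis $k\leq L\leq n-k$ is what makes this combinatorial accounting feasible and leaves enough room in $[n]\setminus J$ to host $T$ together with the fixed coordinates.

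Setting $g(y):=f(\phi(y))$, Boolean-ness is automatic since the image of $\phi$ lies entirely in the slice. For the degree bound, pick any polynomial $P$ of degree at most $d$ agreeing with $f$ on $\binom{[n]}{k}$; then $g(y)=P(\phi(y))$ is a polynomial in $y$ of degree at most $d$, because $\phi$ is affine in $y$. So $g$ is a Boolean degree $d$ function on $\{0,1\}^L$; it remains only to argue that $g$ depends on all $L$ coordinates.

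I would do this by contrapositive: if $g$ is insensitive to $y_\ell$, then unfolding through $\phi$ means that $f$ is invariant under the transposition $(j_\ell,\sigma(j_\ell))$ at every point of the image of $\phi$. The $J$-junta property of $f$ already guarantees invariance under every transposition that is internal to $[n]\setminus J$; I would chain this with the newly obtained invariance and apply Lemma~\ref{lem:triangle-inequality} to propagate invariance under $(j_\ell,\sigma(j_\ell))$ to invariance under $(j_\ell,t)$ for every $t\in[n]\setminus J$. Together with the original $J$-junta invariance, this shows that $f$ is in fact a junta on $J\setminus\{j_\ell\}$, contradicting the hypothesis that $f$ is not an $(L-1)$-junta.

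The main obstacle is this last propagation step: the image of $\phi$ is only a thin subset of the slice, so the transposition invariance observed on it must be carefully lifted to the whole of $\binom{[n]}{k}$. The degenerate triangle inequality is the right tool, but one has to exhibit the correct chain of transpositions linking an arbitrary slice element to a point in the image of $\phi$. A secondary subtlety is that the fixed-part weight $k-L$ must be nonnegative, which combined with the hypothesis $L\geq k$ pins down the regime in which the pairing balances; this is exactly the regime that arises when the lemma is used together with Lemma~\ref{lem:main}, which already forces $L\leq C^d\leq k$.
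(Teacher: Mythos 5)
Your construction of $g$ by pulling $f$ back along the affine embedding $\phi$ is a genuinely different, and in one respect slicker, route than the paper's. The paper simply takes $g$ to be the junta function itself (after relabelling, $f(x)=g(x_1,\ldots,x_L)$, and every point of $\{0,1\}^L$ is attained as $x|_{\{1,\ldots,L\}}$), and then has to work to certify $\deg g\le d$: it symmetrizes the representing polynomial over permutations of the non-junta coordinates and invokes Minsky--Papert to replace the dependence on $x_{L+1},\ldots,x_n$ by a univariate polynomial in $k-x_1-\cdots-x_L$. Your paired embedding makes the degree bound immediate, since $P\circ\phi$ has degree at most $d$ because $\phi$ is affine. (Both arguments in fact require $L\le k\le n-L$ rather than the $k\le L\le n-k$ printed in the statement; this is the regime in which the lemma is applied after Lemma~\ref{lem:main}, and you correctly flag it.)

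Where your write-up goes astray is the final step. The ``propagation'' via Lemma~\ref{lem:triangle-inequality} that you single out as the main obstacle is both unnecessary and, as sketched, not a proof: that lemma requires $\Inf_{ik}[f]=\Inf_{jk}[f]=0$, i.e.\ invariance on the \emph{whole} slice, whereas you only have invariance on the thin image of $\phi$, and you do not exhibit the chain of transpositions needed to lift it. The fix is to use the junta structure you already introduced: since $\phi(y)|_J=y$, you have $g(y)=f(\phi(y))=h(\phi(y)|_J)=h(y)$, so your $g$ \emph{is} the junta function $h$. If $g$ were insensitive to $y_\ell$, then $h$ would not depend on its $\ell$th input, and $f(x)=h(x|_J)$ would be a $(J\setminus\{j_\ell\})$-junta, i.e.\ an $(L-1)$-junta, contradicting the hypothesis outright. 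With that one-line replacement your proof is complete and correct.
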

\begin{proof}
 Without loss of generality, we can assume that $f(x) = g(x_1,\ldots,x_L)$ for some Boolean function $g$ on the $L$-dimensional hypercube. Since $f$ is not an $(L-1)$-junta, the function $g$ depends on all coordinates. Since $k \leq L \leq n-k$, as $x$ goes over all points in $\binom{[n]}{k}$, the vector $x_1,\ldots,x_L$ goes over all points in $\{0,1\}^L$. It remains to show that there is a degree~$d$ polynomial agreeing with~$g$ on $\{0,1\}^L$.
 
 Since $f$ has degree at most~$d$, there is a degree~$d$ multilinear polynomial $P$ such that $f = P$ for every point in $\binom{[n]}{k}$. If $\pi$ is any permutation of $\{L+1,\ldots,n\}$ then $f = f^\pi$, where $f^\pi(x) = f(x^\pi)$. Denoting the set of all such permutations by $\Pi$, if we define $Q := \EE_{\pi \in \Pi}[P^\pi]$ then $f = Q$ for every point in $\binom{[n]}{k}$. The polynomial $Q$ is a degree~$d$ polynomial which is invariant under permutations from $\Pi$. For $a_1,\ldots,a_L \in \{0,1\}^L$ summing to $a \leq d$, let $Q_{a_1,\ldots,a_L}(x_{L+1},\ldots,x_n) = Q(a_1,\ldots,a_L,x_{L+1},\ldots,x_n)$. This is a degree~$d-a$ symmetric polynomial, and so a classical result of Minsky and Papert~\cite{Minsky:1988:PEE:50066} (see also~\cite[Lemma~3.2]{NisanSzegedy}) implies that there exists a degree~$d-a$ univariate polynomial $R_{a_1,\ldots,a_L}$ such that $Q_{a_1,\ldots,a_L}(x_{L+1},\ldots,x_n) = R_{a_1,\ldots,a_L}(x_{L+1} + \cdots + x_n)$ for all $x_{L+1},\ldots,x_n \in \{0,1\}^{n-L}$. Since $x_{L+1} + \cdots + x_n = k - x_1 - \cdots - x_L$, it follows that for inputs $x$ in $\binom{[n]}{k}$, we have
\[
 f(x_1,\ldots,x_n) = \sum_{\substack{a_1,\ldots,a_L \in \{0,1\}^L \\ a_1+\cdots+a_L \leq d}} \prod_{i\colon a_i=1} x_i \cdot R_{a_1,\ldots,a_L}(k - x_1 - \cdots - x_L).
\]
 The right-hand side is a degree~$d$ polynomial in $x_1,\ldots,x_L$ which agrees with $g$ on $\{0,1\}^L$.
\end{proof}

Theorem~\ref{thm:main} immediately follows from combining Lemma~\ref{lem:main} and Lemma~\ref{lem:slice-to-cube}.

\smallskip

We conclude this section by proving a converse of Lemma~\ref{lem:cube-to-slice}.

\begin{lemma} \label{lem:cube-to-slice}
 Suppose that $g$ is a Boolean degree~$d$ function on $\{0,1\}^L$ depending on all coordinates. Then for all $n,k$ satisfying $L \leq k \leq n-L$ there exists a Boolean degree~$d$ function $f$ on $\binom{[n]}{k}$ which is an $L$-junta but not an $(L-1)$-junta.
\end{lemma}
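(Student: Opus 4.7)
The plan is to take the obvious candidate $f(x_1,\ldots,x_n) := g(x_1,\ldots,x_L)$ and verify the three required properties: (i) $f$ is well-defined and $0,1$-valued on $\binom{[n]}{k}$; (ii) $f$ has degree at most $d$ as a function on the slice; (iii) $f$ is a (trivially) $L$-junta but \emph{not} an $(L-1)$-junta. Property (i) is immediate. Property (ii) is also essentially immediate: if $P(x_1,\ldots,x_L)$ is the multilinear polynomial representing $g$ on $\{0,1\}^L$, then $P$ viewed as a polynomial in $x_1,\ldots,x_n$ (not depending on $x_{L+1},\ldots,x_n$) agrees with $f$ on every point of the slice, so by the alternative characterization of degree given in Section~\ref{sec:prel}, $\deg f \leq d$.

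The content is in (iii). Because the definition of junta on the slice is not unique, to show $f$ is not an $(L-1)$-junta I need to rule out \emph{every} potential junta set, i.e., show that for every $T \subseteq [n]$ with $|T| = L-1$, the function $f$ is not invariant under all transpositions in $[n] \setminus T$. Fix such a $T$. Since $|T| = L-1$, the set $[L] \setminus T$ is nonempty; pick $i \in [L] \setminus T$. Because $g$ depends on its $i$th coordinate, choose $y,y' \in \{0,1\}^L$ differing only in position $i$ with $g(y) \neq g(y')$; say $y_i = 0$ and $y'_i = 1$, and let $w$ be the Hamming weight of~$y$.

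The idea is then to produce $x, x' \in \binom{[n]}{k}$ with $x|_{[L]} = y$, $x'|_{[L]} = y'$, and $x' = x^{(i\;j)}$ for some $j \in [n] \setminus (T \cup [L])$ with $x_j = 1$. The hypothesis $L \leq k \leq n-L$ ensures both that $x$ can be chosen with $x|_{[L]} = y$ and Hamming weight exactly $k$ (so that $k - w \geq 1$ ones must be placed in $[L+1,n]$), and that among the $n - L$ coordinates in $[L+1,n]$ at least $n - L - (L-1) = n - 2L + 1 \geq 1$ lie outside $T$, so we can place at least one of those $k-w$ ones at such a $j$. Then $x^{(i\;j)}$ is a transposition in $[n] \setminus T$, so if $f$ were a $T$-junta we would have $f(x) = f(x^{(i\;j)})$; but by construction $f(x) = g(y) \neq g(y') = f(x^{(i\;j)})$, a contradiction.

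The only genuine obstacle is the bookkeeping in (iii)---specifically checking that the inequalities $L \leq k \leq n-L$ are exactly what is needed to find the transposition $(i\;j)$ with both endpoints outside $T$. Everything else is routine. No hypercontractivity, restriction arguments, or Minsky--Papert type results are needed; this lemma is purely a construction plus an elementary counting argument.
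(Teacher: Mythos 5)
Your proposal is correct and follows essentially the same route as the paper's proof: the same construction $f(x) = g(x_1,\ldots,x_L)$, the same degree argument via a polynomial extension, and the same transposition argument (picking $i \in [L]$ outside the candidate junta set where $g$ depends and $j$ outside $[L]$ and the junta set) to rule out every $(L-1)$-element junta set. The only difference is presentational: the paper argues by contradiction from a hypothetical junta set $S$, while you quantify directly over all sets $T$ of size $L-1$; these are equivalent.
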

\begin{proof}
 We define $f(x_1,\ldots,x_n) = g(x_1,\ldots,x_L)$. Clearly, $f$ is an $L$-junta. Since $g$ has degree at most~$d$, there is a polynomial $P$ which agrees with $g$ on all points of $\{0,1\}^L$. The same polynomial also agrees with $f$ on all points of $\binom{[n]}{k}$, and so $f$ also has degree at most~$d$. It remains to show that $f$ is not an $(L-1)$-junta.
 
 Suppose, for the sake of contradiction, that $f$ were an $(L-1)$-junta. Then there exists a set $S$ of size at most $L-1$ and a Boolean function $h\colon \{0,1\}^S \to \{0,1\}$ such that $f(x) = h(x|_S)$. Since $|S| < L$, there exists some coordinate $i \in \{1,\ldots,L\} \setminus S$. Since $g$ depends on all coordinates, there are two inputs $y,z$ to $g$ differing only in the $i$th coordinate, say $y_i = 0$ and $z_i = 1$, such that $g(y) \neq g(z)$. Since $n \geq 2L$, there exists a coordinate $j \in [n] \setminus (\{1,\ldots,L\} \cup S)$. Since $L \leq k \leq n-L$, we can extend $y$ to an input $\tilde{y}$ to $f$ such that $x_j = 1$. The input $\tilde{z} = \tilde{y}^{(i\;j)}$ extends $z$. Since $i,j \notin S$, the inputs $\tilde{y},\tilde{z}$ agree on all coordinates in $S$, and so $f(\tilde{y}) = h(\tilde{y}|_S) = h(\tilde{z}|_S) = f(\tilde{z})$. On the other hand, $f(\tilde{y}) = g(y) \neq g(z) = f(\tilde{z})$. This contradiction shows that $f$ cannot be an $(L-1)$-junta.
\end{proof}

\section{Discussion} \label{sec:discussion}

\paragraph{Optimality.}
Lemma~\ref{lem:cube-to-slice} shows that the size of the junta in Theorem~\ref{thm:main} is optimal. However, it is not clear whether the bounds on $k$ are optimal. The theorem fails when $k \leq d$ or $k \geq n-d$, since in these cases every function has degree~$d$. This prompts us to define the following two related quantities:
\begin{enumerate}
 \item $\zeta(d)$ is the minimal value such that every Boolean degree~$d$ function on $\binom{[n]}{k}$ is an $O(1)$-junta whenever $\zeta(d) \leq k \leq n-\zeta(d)$.
 \item $\xi(d)$ is the minimal value such that every Boolean degree~$d$ function on $\binom{[n]}{k}$ is a $\gamma(d)$-junta whenever $\xi(d) \leq k \leq n-\xi(d)$.
\end{enumerate}
Clearly $d < \zeta(d) \leq \xi(d)$. We can improve this to $\zeta(d) \geq \eta(d) \geq 2 \lceil \frac{d+1}{2} \rceil$, where $\eta(d)$ is the maximum integer such that there exists a non-constant univariate degree~$d$ polynomial $P_d$ satisfying $P_d(0),\ldots,P_d(\eta(d)-1) \in \{0,1\}$. Given such a polynomial $P_d$, we can construct a degree~$d$ function $f_d$ on $\binom{[n]}{k}$ which is not a junta:
\[
 f_d(x_1,\ldots,x_n) = P_d(x_1+\cdots+x_{\lfloor n/2 \rfloor}).
\]
When $k < \eta(d)$, the possible values of $x_1+\cdots+x_{\lfloor n/2 \rfloor}$ are such that $f_d$ is Boolean. One can check that $f_d$ is not an $L$-junta unless $L \geq \lfloor n/2 \rfloor$. The following polynomial shows that $\eta(d) \geq 2\lceil \frac{d+1}{2} \rceil$:
\[
 P_d(\sigma) = \sum_{e=0}^d (-1)^e \binom{\sigma}{e},
\]
where $\binom{\sigma}{0} = 1$. While this construction can be improved for specific~$d$ (for example, $\eta(7) = 9$ and $\eta(12) = 16$), the upper bound $\eta(d) \leq 2d$ shows that this kind of construction cannot given an exponential lower bound on $\zeta(d)$.

Curiously, essentially the same function appears in~\cite[Section~7]{dfh2} as an example of a degree~$d$ function on the biased hypercube which is almost Boolean but somewhat far from being constant.

\paragraph{Extensions.}
It would be interesting to extend Theorem~\ref{thm:main} to other domains. In recent work~\cite{FilmusIhringer1}, we explored Boolean degree~$1$ functions on various domains, including various association schemes and finite groups, and the multislice (consult the work for the appropriate definitions). Inspired by these results, we make the following conjectures:
\begin{enumerate}
 \item If $f$ is a Boolean degree~$d$ function on the symmetric group then there are sets $I,J$ of $O(1)$ indices such that $f(\pi)$ depends only on $\pi(i)|_{i \in I}$ and $\pi^{-1}(j)|_{j \in J}$.
 \item If $f$ is a Boolean degree~$d$ function on the Grassmann scheme then there are $O(1)$ points and hyperplanes such that $f(S)$ depends only on which of the points is contained in $S$, and which of the hyperplanes contain $S$.
 \item If $f$ is a Boolean degree~$d$ function on the multislice $M(k_1,\ldots,k_m)$ for $k_1,\ldots,k_m \geq \exp(d)$ then $f$ is a $\gamma_m(d)$-junta, where $\gamma_m(d)$ is the maximum number of coordinates that a Boolean degree~$d$ function on the Hamming scheme $H(n,m)$ can depend on.
\end{enumerate}
We leave it to the reader to show that $\gamma_m(d)$ exists. In fact, simple arguments show that $m^{d-1} \leq \gamma_m(d) \leq \gamma(\lceil \log_2 m \rceil d)$.

\bibliographystyle{plain}
\bibliography{biblio_plus}

\end{document}